\newcommand{\ddbar}{\sqrt{-1} \partial \overline{\partial}}
\newcommand{\Ric}{\mathrm{Ric}}
\newcommand{\ov}[1]{\overline{#1}}
\newcommand{\tr}[2]{\mathrm{tr}_{#1}{#2}}
\newcommand{\vp}{\varphi}
\newcommand{\ve}{\varepsilon}
\renewcommand{\leq}{\leqslant}
\renewcommand{\geq}{\geqslant}
\numberwithin{equation}{section}
\begin{document}
\newtheorem{claim}{Claim}
\newtheorem{theorem}{Theorem}[section]
\newtheorem{lemma}[theorem]{Lemma}
\newtheorem{corollary}[theorem]{Corollary}
\newtheorem{proposition}[theorem]{Proposition}
\newtheorem{conjecture}[theorem]{Conjecture}

\theoremstyle{definition}
\newtheorem{remark}[theorem]{Remark}
\newtheorem{question}[theorem]{Question}

\author[V. Tosatti]{Valentino Tosatti$^{*}$}
 \address{Department of Mathematics, Northwestern University, 2033 Sheridan Road, Evanston, IL 60208}
\email{tosatti@math.northwestern.edu}
\author[X. Yang]{Xiaokui Yang}
 \address{Academy of Mathematics and Systems Science, The Chinese Academy of Sciences, Beijing, China, 100190}
\email{xkyang@amss.ac.cn}

\title{An extension of a theorem of Wu-Yau}

\thanks{$^{*}$Supported in part by a Sloan Research Fellowship and NSF grant DMS-1308988.}

\begin{abstract}
We show that a compact K\"ahler manifold with nonpositive holomorphic sectional curvature has nef canonical bundle. If the holomorphic sectional curvature
is negative then it follows that the canonical bundle is ample, confirming a conjecture of Yau. The key ingredient is the recent solution of this conjecture in the
projective case by Wu-Yau.
\end{abstract}
\maketitle

\section{Introduction}
The holomorphic sectional curvature of a K\"ahler manifold is by definition equal to the Riemannian sectional curvature of holomorphic planes in the tangent space, and it determines the whole curvature tensor. Despite this simple definition, its properties have remained rather mysterious. In this note, we study its relationship with Ricci curvature, and more precisely how negativity of the holomorphic sectional curvature affects the positivity of the canonical bundle.

Yau's Schwarz Lemma \cite{YaS} implies that if a compact K\"ahler manifold $(X,\omega)$ has nonpositive
holomorphic sectional curvature, then $X$ does not contain any rational curve. If $X$ is assumed to be projective, then thanks to Mori's Cone Theorem \cite{Mo} it follows that $K_X$ is nef (which means that $c_1(K_X)=-c_1(X)$ belongs to the closure of the K\"ahler cone), since if $K_X$ is not nef then $X$ contains a rational curve.
We conclude that if $X$ is projective and  $\omega$ has nonpositive
holomorphic sectional curvature then $K_X$ is nef.

Our goal is to extend this result to all K\"ahler manifolds, not necessarily projective. Note that there are many non-projective K\"ahler manifolds with nonpositive
holomorphic sectional curvature, for example a generic torus.

The Kodaira classification of K\"ahler surfaces shows that if $K_X$ is not nef then again $X$ contains a rational curve, and we are done.
The Cone Theorem was very recently extended to K\"ahler threefolds by H\"oring-Peternell \cite{HP}, and therefore in this case we can again conclude what we want. In the main result of this note, we prove this in all dimensions (bypassing Mori theory), using very recent ideas of Wu-Yau \cite{WY}:

\begin{theorem}\label{main}
Let $(X,\omega)$ be a compact K\"ahler manifold with nonpositive holomorphic sectional curvature. Then the canonical bundle $K_X$ is nef.
\end{theorem}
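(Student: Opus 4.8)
My plan is to handle all dimensions simultaneously by a continuity method, in the spirit of La~Nave--Tian and of the recent work of Wu--Yau. Write $\mathcal{K}$ for the K\"ahler cone of $X$, set $\alpha_t := [\omega] + t\,c_1(K_X)$ for $t \ge 0$, and let $\hat T := \sup\{t \ge 0 : \alpha_t \in \mathcal{K}\}$. The bundle $K_X$ is nef if and only if $\hat T = \infty$: indeed if $\hat T = \infty$ then $c_1(K_X) = \lim_{t\to\infty}\big(\tfrac1t[\omega] + c_1(K_X)\big) \in \overline{\mathcal{K}}$, while conversely if $K_X$ is nef then each $\alpha_t$ is the sum of a K\"ahler class and a nef class, hence K\"ahler. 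Since $\mathcal{K}$ is open we have $\hat T > 0$, and I suppose for contradiction that $\hat T < \infty$. For $t \in (0,\hat T)$ the class $\alpha_t$ is K\"ahler --- it lies on the segment joining the K\"ahler classes $\alpha_0 = [\omega]$ and $\alpha_{t'}$ for any $t' \in (t,\hat T)$ --- so by the Aubin--Yau theorem one can solve the complex Monge--Amp\`ere equation encoding
\[
\Ric(\omega_t) = \tfrac1t\,(\omega - \omega_t), \qquad \omega_t \in \alpha_t,\quad \omega_t > 0 .
\]
Writing $\omega_t = \chi_t + \sqrt{-1}\partial\overline{\partial}\varphi_t$ with $\chi_t$ a fixed K\"ahler representative, the scalar equation takes the form $(\chi_t + \sqrt{-1}\partial\overline{\partial}\varphi_t)^n = e^{\varphi_t/t + G_t}\Omega$ with $G_t$ smooth and $\Omega$ a fixed volume form; the coefficient $\tfrac1t > 0$ of $\varphi_t$ in the exponent yields existence and uniqueness as soon as $\alpha_t \in \mathcal{K}$.

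The crux is a \emph{uniform} Schwarz-type estimate: $\tr{\omega_t}{\omega} \le n$ for every $t \in (0,\hat T)$. I would derive it from the Chern--Lu inequality, i.e.\ the Yau Schwarz-lemma computation applied to the identity map $(X,\omega_t) \to (X,\omega)$. Nonpositivity of the holomorphic sectional curvature of $\omega$ contributes a nonnegative target-curvature term --- this uses Royden's trick to replace the holomorphic bisectional curvature appearing in the naive computation by the holomorphic sectional curvature, at the cost of a dimensional constant --- while substituting $\Ric(\omega_t) = \tfrac1t(\omega - \omega_t)$ into the domain-Ricci term, together with $\operatorname{tr}\big((\omega_t^{-1}\omega)^2\big) \ge \tfrac1n\big(\tr{\omega_t}{\omega}\big)^2$, gives an inequality of the shape
\[
\Delta_{\omega_t}\log\big(\tr{\omega_t}{\omega}\big) \;\ge\; \tfrac1t\Big(\tfrac1n\,\tr{\omega_t}{\omega} - 1\Big) .
\]
Evaluating at a maximum point of $\tr{\omega_t}{\omega}$ forces $\tr{\omega_t}{\omega} \le n$ there, hence on all of $X$. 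The essential point is that the bound is independent of $t$: the coupling constant $\tfrac1t$ multiplies the favorable and the unfavorable contributions alike and cancels in the comparison, which is precisely why one runs \emph{this} continuity path rather than, say, the twisted K\"ahler--Einstein equations $\Ric(\omega_t) = -\omega_t + t\omega$, for which the analogous computation only gives $\omega_t \ge \tfrac tn\,\omega$, degenerating as $t \to 0$. I expect this estimate to be the main obstacle, in particular closing the maximum principle with only $H_\omega \le 0$ in hand rather than $H_\omega < 0$ as in Wu--Yau; absorbing lower-order terms that I have suppressed above may require inserting a barrier built from the K\"ahler potential of $\omega_t$.

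Granting $\omega_t \ge \tfrac1n\omega$ for all $t < \hat T$, the rest is short. For any irreducible positive-dimensional analytic subvariety $V \subseteq X$ with $k = \dim V$ and any $t < \hat T$,
\[
\alpha_t^{k}\cdot[V] = \int_V \omega_t^{k} \;\ge\; \frac{1}{n^k}\int_V \omega^{k} = \frac{1}{n^k}\,[\omega]^{k}\cdot[V] \;>\; 0 ,
\]
because $[\omega]$ is a K\"ahler class; letting $t \uparrow \hat T$ and using continuity of intersection numbers gives $\alpha_{\hat T}^{k}\cdot[V] > 0$ for every such $V$, and likewise $\alpha_{\hat T}^{n} > 0$. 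On the other hand $\alpha_{\hat T} = [\omega] + \hat T\,c_1(K_X)$ is nef, being the limit of the K\"ahler classes $\alpha_t$ as $t \uparrow \hat T$. By the Demailly--Paun numerical characterization of the K\"ahler cone on a compact K\"ahler manifold, a nef class that is strictly positive against every subvariety is K\"ahler; hence $\alpha_{\hat T} \in \mathcal{K}$. Since $\mathcal{K}$ is open and $\hat T < \infty$, this is a contradiction. Therefore $\hat T = \infty$ and $K_X$ is nef. (For $\dim X \le 2$ the statement is already covered by the discussion in the introduction, which provides a useful consistency check.)
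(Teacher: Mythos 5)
Your argument is correct, and while the analytic core coincides with the paper's, the endgame is genuinely different. First, the coincidences: your continuity path is the paper's in disguise. The paper fixes $\ve_0>0$ with $\ve_0[\omega]-c_1(X)$ nef but not K\"ahler and solves $\Ric(\omega_\ve)=-\omega_\ve+(\ve+\ve_0)\omega$; setting $t=1/(\ve+\ve_0)$ one checks that $t\,\omega_\ve$ lies in your class $\alpha_t$ and satisfies exactly your equation $\Ric=\frac1t(\omega-\cdot)$, with $\hat T=1/\ve_0$. Your key estimate $\tr{\omega_t}{\omega}\le n$ is then precisely the paper's Lemma 2.1 (Wu--Yau's Proposition 9, i.e.\ Chern--Lu plus Royden) applied with $\kappa=0$, $\lambda=\nu=1/t$: the hedge in your write-up about ``closing the maximum principle with only $H_\omega\le 0$'' is unnecessary, since the twist term $\frac{\nu}{n}(\tr{\omega_t}{\omega})^2$ alone closes it and no barrier is needed; likewise your criticism of the path $\Ric=-\omega_t+t\omega$ does not apply to the paper's version, where the twist is $(\ve+\ve_0)\omega\ge\ve_0\omega$ and the bound stays uniform. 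Where you genuinely diverge is after the uniform bound $\omega_t\ge\frac1n\omega$. The paper pushes on to a uniform upper bound on the potential, hence $C^{-1}\omega\le\omega_\ve\le C\omega$, then Yau's $C^3$ estimate and bootstrapping to full $C^k$ bounds, and extracts a smooth limit K\"ahler metric in the non-K\"ahler class $\ve_0[\omega]-c_1(X)$ --- a contradiction. You instead stop at the zeroth-order lower bound, integrate it over subvarieties to get $\alpha_{\hat T}^{\dim V}\cdot[V]\ge n^{-\dim V}[\omega]^{\dim V}\cdot[V]>0$, and invoke the Demailly--P\u{a}un numerical characterization to conclude that the nef class $\alpha_{\hat T}$ is K\"ahler, contradicting openness of the K\"ahler cone. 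Your route buys a substantially shorter proof that avoids the entire higher-order-estimate machinery (which the paper admits is the delicate part, since the reference metrics degenerate as $\ve\to0$), at the price of importing the deep Demailly--P\u{a}un theorem; the paper's route is longer but self-contained modulo Yau's a priori estimates. Both are valid.
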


As a corollary, using the work of Wu-Yau \cite{WY}, we confirm a conjecture of Yau (see e.g. \cite{WY}), which was very recently settled in the projective case by Wu-Yau \cite{WY}:
\begin{corollary}\label{cor}
Let $(X,\omega)$ be a compact K\"ahler manifold with negative holomorphic sectional curvature. Then the canonical bundle $K_X$ is ample.
\end{corollary}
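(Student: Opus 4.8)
The plan is to deduce Corollary~\ref{cor} from the projective case of Yau's conjecture proved by Wu-Yau~\cite{WY}; thus it suffices to upgrade the conclusion of Theorem~\ref{main} from ``$K_X$ nef'' to ``$X$ projective'', since once $X$ is projective the ampleness of $K_X$ is precisely \cite{WY}. Since negative holomorphic sectional curvature is in particular nonpositive, Theorem~\ref{main} already gives that $K_X$ is nef, so $c_1(K_X)^n\ge 0$; I would then show that in fact $c_1(K_X)^n>0$, i.e.\ that $K_X$ is big. A compact K\"ahler manifold carrying a big line bundle is Moishezon, and a Moishezon K\"ahler manifold is projective, so this would finish the argument.

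To prove bigness I would revisit the analytic input behind Theorem~\ref{main} --- either the K\"ahler-Ricci flow $\ddt\omega(t)=-\Ric(\omega(t))$, which exists for all time precisely because $K_X$ is nef (Tian-Zhang and its K\"ahler extension), or the Wu-Yau continuity method --- now exploiting that the holomorphic sectional curvature of $\omega$ is bounded above by a \emph{strictly} negative constant $-\kappa$. The Schwarz-lemma computation of Wu-Yau applied to the identity map from the evolving metric to $(X,\omega)$ has a term coming from $-\kappa$ that dominates the curvature terms of the target, and should yield an estimate of the form $\tr{\omega(t)}{\omega}\le C/(1+t)$ uniform in $t$, equivalently $\omega(t)\ge\tfrac{1+t}{C}\,\omega$ for all $t\ge 0$. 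Since $[\omega(t)]=[\omega]+t\,c_1(K_X)$, rescaling by $1/t$ and letting $t\to\infty$ gives
$$c_1(K_X)^n=\lim_{t\to\infty}\frac{1}{t^n}\int_X\omega(t)^n\ge\frac{1}{C^n}\int_X\omega^n>0.$$
As $K_X$ is nef, the volume of $c_1(K_X)$ equals its top self-intersection, so $\int_X c_1(K_X)^n>0$ makes $K_X$ big.

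With $K_X$ big and $X$ K\"ahler, $X$ is Moishezon, hence projective, and Wu-Yau's theorem~\cite{WY} then yields that $K_X$ is ample, proving the corollary. The only substantive point is the second step: Theorem~\ref{main} by itself produces only nefness, and one must genuinely use the strict negativity of the holomorphic sectional curvature to extract a strictly positive lower bound for $\int_X c_1(K_X)^n$ --- concretely, to obtain the decaying trace bound above and to check that it survives the rescaled limit. Everything else (the passage from bigness to Moishezon to projective, and the citation of \cite{WY}) is formal.
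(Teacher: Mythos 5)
Your argument is correct, but it takes a genuinely different and considerably longer route than the paper's. The paper's proof of Corollary \ref{cor} is a one-line citation: \cite[Theorem 7]{WY} is stated for compact \emph{K\"ahler} (not necessarily projective) manifolds with nef canonical bundle and negative holomorphic sectional curvature, so once Theorem \ref{main} supplies nefness, ampleness follows immediately with no further analysis. You instead reduce to the projective case of \cite{WY} by first proving that $X$ is projective, via bigness of $K_X$. That detour is sound: since $K_X$ is nef, for every $\ve>0$ one can solve the Monge--Amp\`ere equation to produce $\omega_\ve\in \ve[\omega]-c_1(X)$ with $\Ric(\omega_\ve)=-\omega_\ve+\ve\omega$, and Lemma \ref{schwarz} with $\kappa>0$, $\lambda=1$, $\nu=\ve$ gives $\sup_X\tr{\omega_\ve}{\omega}\le 2n/((n+1)\kappa)=:C$ uniformly, hence $\omega_\ve\ge C^{-1}\omega$ and $\int_X(\ve[\omega]-c_1(X))^n\ge C^{-n}\int_X\omega^n$, so $c_1(K_X)^n>0$ in the limit $\ve\to 0$; this elliptic version is a bit cleaner than your parabolic decay estimate $\tr{\omega(t)}{\omega}\le C/(1+t)$, though that too is correct by the maximum principle applied to $\log\tr{\omega(t)}{\omega}+\log(a(t+c))$. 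The remaining steps --- nef with positive top self-intersection implies big on a compact K\"ahler manifold (Demailly's holomorphic Morse inequalities / Demailly--Paun), big implies Moishezon, Moishezon plus K\"ahler implies projective --- are standard but are genuine extra external ingredients that the paper does not need. Your route has the side benefit of establishing Corollary \ref{cor2} (projectivity) directly, rather than deducing it afterwards from ampleness of $K_X$ via Kodaira embedding as the paper does; what the paper's approach buys is brevity and the observation that the only obstruction to running Wu--Yau's argument on a non-projective K\"ahler manifold was nefness of $K_X$, which is exactly what Theorem \ref{main} removes.
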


Indeed, Theorem \ref{main} shows that $K_X$ is nef, and we can then apply \cite[Theorem 7]{WY} and conclude that $K_X$ is ample.

This also gives a proof of another open problem posed by Yau \cite[p.1313]{Wo2}:
\begin{corollary}\label{cor2}
Let $(X,\omega)$ be a compact K\"ahler manifold with negative holomorphic sectional curvature. Then $X$ is projective.
\end{corollary}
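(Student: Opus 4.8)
The plan is to deduce this at once from Corollary \ref{cor} together with the Kodaira embedding theorem. First I would apply Corollary \ref{cor} to the manifold $(X,\omega)$: since $\omega$ has negative holomorphic sectional curvature, $K_X$ is ample. In particular $K_X$ is a positive line bundle, so it admits a smooth Hermitian metric whose Chern curvature form is a Kähler form representing $c_1(K_X)$; this class lies in $H^2(X,\mathbb{Z})$, being the first Chern class of a holomorphic line bundle.

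Next I would invoke the Kodaira embedding theorem: a compact complex manifold carrying a positive line bundle is projective algebraic. Concretely, for $m$ sufficiently large the line bundle $K_X^{\otimes m}$ is very ample and its global sections furnish a holomorphic embedding $X \hookrightarrow \mathbb{P}^N$, so $X$ is projective. This completes the argument.

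There is no real obstacle here: the entire content has already been absorbed into Theorem \ref{main} and \cite[Theorem 7]{WY} via Corollary \ref{cor}, and the passage from ``$K_X$ ample'' to ``$X$ projective'' is the classical Kodaira criterion, requiring nothing further. If one preferred to avoid citing Kodaira embedding directly, one could instead note that $X$ with $K_X$ ample is of general type, hence Moishezon, and being also Kähler it is projective by Moishezon's theorem; but the Kodaira route is the most transparent, so that is the one I would record.
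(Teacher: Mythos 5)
Your proposal is correct and is exactly the argument the paper intends: Corollary \ref{cor2} is stated immediately after the deduction of Corollary \ref{cor}, the point being that ampleness of $K_X$ makes $X$ projective via Kodaira embedding. Nothing further is needed.
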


Using Yau's Theorem \cite{Ya}, we can rephrase Corollary \ref{cor} by saying that if there exists a K\"ahler metric with negative holomorphic sectional curvature, then there is a (possibly different) K\"ahler metric with negative Ricci curvature.

When $X$ is projective, Corollary \ref{cor} was proved very recently by Wu-Yau \cite{WY}, and we make use of some of their ideas in this note. Earlier work on this conjecture include \cite{Wo} (where it is proved for K\"ahler surfaces), \cite{HLW} (for projective threefolds), \cite{WWY} (for projective manifolds with Picard number $1$), \cite{HLW2} (for projective manifolds assuming the abundance conjecture) and finally \cite{WY} (for all projective manifolds). In this note we observe that a modification of the technique in \cite{WY}, using also an argument by contradiction, can be used to prove Theorem \ref{main} as well, and therefore get rid of the projectivity assumption.

\begin{remark}
Corollary \ref{cor} falls into the circle of ideas around Kobayashi hyperbolicity of complex manifolds, see e.g. \cite{Di, Pe, WY} for more on this.
\end{remark}

\begin{remark}As in \cite[Corollary 4]{WY}, Theorem \ref{main} implies that if $(X,\omega)$ be a compact K\"ahler manifold with nonpositive holomorphic sectional curvature,
then the canonical bundle $K_Y$ of every compact complex submanifold $Y$ of $X$ is nef. Similarly, if $\omega$ has negative holomorphic sectional curvature, then $K_Y$ is ample.
\end{remark}

\begin{remark}
It would be interesting to know whether Corollary \ref{cor} still valid if we only assume that the holomorphic sectional curvature is nonpositive and strictly negative at one point. This is shown to be true in \cite{WWY} if $X$ is projective with Picard number $1$, and in \cite{HLW2} for projective surfaces.
\end{remark}

\begin{remark}
If $(X,\omega)$ is a compact Hermitian manifold, then we can still define its holomorphic sectional curvature, using the Chern connection. Is the analog of Theorem \ref{main} still true in this more general situation? While the analogous complex Monge-Amp\`ere equation in the Hermitian case is solved in \cite{Ch} (see also \cite{TW1,TW2}), the argument with the Schwarz Lemma uses the K\"ahler condition for Royden's lemma (which requires the K\"ahler symmetries of the curvature tensor), and also to make sure that all the Hermitian Ricci curvatures are equal.
\end{remark}

\noindent
{\bf Acknowledgments. }We thank Professor S.-T. Yau for useful comments on this note, and for his support, and S. Diverio for communications. We also acknowledge an intellectual debt of gratitude to the work of Wu-Yau.

\section{Proof of the main theorem}

In this section we give the proof of Theorem \ref{main}, by modifying the method used by Wu-Yau \cite{WY} to prove Corollary \ref{cor} in the projective case.

The following is the key lemma, from Wu-Yau \cite{WY} (cf. \cite{WYZ, WWY}), and it is an application of Yau's Schwarz Lemma \cite{YaS} and a trick of Royden \cite{Ro}.

\begin{lemma}[\cite{WY}, Proposition 9]\label{schwarz}
Let $X$ be a compact K\"ahler manifold with two K\"ahler metrics $\omega,\hat{\omega}$, such that $\omega$ has holomorphic sectional curvature bounded above by a constant $-\kappa\leq 0$, and $\hat{\omega}$ satisfies
\begin{equation}\label{assum}
\Ric(\hat{\omega})\geq -\lambda\hat{\omega}+\nu\omega,
\end{equation}
for some constants $\lambda,\nu>0$. Then we have
$$\Delta_{\hat{\omega}}\log\tr{\hat{\omega}}{\omega}\geq\left(\frac{n+1}{2n}\kappa+\frac{\nu}{n}\right)\tr{\hat{\omega}}{\omega}-\lambda.$$
In particular, we have
$$\sup_X \tr{\hat{\omega}}{\omega}\leq \frac{\lambda}{\frac{n+1}{2n}\kappa+\frac{\nu}{n}}.$$
\end{lemma}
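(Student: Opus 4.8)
The plan is to run Yau's Schwarz-lemma computation for the identity map regarded as a holomorphic map $(X,\hat\omega)\to(X,\omega)$, and then to invoke a polarization trick of Royden to upgrade the hypothesis on the \emph{holomorphic sectional} curvature of $\omega$ into the bound on the full curvature contraction that appears in that computation. The $\sup$ estimate is then immediate from the maximum principle.

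First, set $S:=\tr{\hat\omega}{\omega}$, fix a point $p\in X$, and choose holomorphic coordinates centered at $p$ that are normal for $\hat\omega$ and simultaneously diagonalize $\omega$: thus $\hat g_{i\bar j}(p)=\delta_{ij}$, $d\hat g_{i\bar j}(p)=0$, and $g_{i\bar j}(p)=\lambda_i\delta_{ij}$ with $\lambda_i\geq 0$ and $S(p)=\sum_i\lambda_i$. Computing $\Delta_{\hat\omega}S=\hat g^{k\bar l}\de_k\de_{\bar l}(\hat g^{i\bar j}g_{i\bar j})$ with the K\"ahler identities produces the Chern--Lu--Yau formula, in which $\Delta_{\hat\omega}S$ is a nonnegative ``Hessian'' term $|\nabla_{\hat\omega}(\mathrm{id})|^2$, plus a term pairing the Ricci curvature of $\hat\omega$ with $\omega$, minus a term pairing the curvature tensor of $\omega$ twice with $\hat\omega$. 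Passing to $\log S$ by $\Delta_{\hat\omega}\log S=\frac{\Delta_{\hat\omega}S}{S}-\frac{|\nabla_{\hat\omega}S|^2}{S^2}$ and discarding (via Cauchy--Schwarz, Yau's trick) the first-order terms of $\frac{\Delta_{\hat\omega}S}{S}$ that are controlled by $\frac{|\nabla_{\hat\omega}S|^2}{S^2}$, one is left at $p$ with an inequality of the form
\begin{equation*}
S\cdot\Delta_{\hat\omega}\log S\;\geq\;\sum_i\lambda_i\,R_{i\bar i}(\hat\omega)\;-\;\sum_{i,k}R_{i\bar i k\bar k}(\omega),
\end{equation*}
the curvature components being taken in the chosen frame.

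Next I would bound the two terms. For the Ricci term, \eqref{assum} gives $R_{i\bar i}(\hat\omega)\geq -\lambda+\nu\lambda_i$ at $p$, so $\sum_i\lambda_i R_{i\bar i}(\hat\omega)\geq -\lambda\sum_i\lambda_i+\nu\sum_i\lambda_i^2\geq -\lambda S+\frac{\nu}{n}S^2$ by Cauchy--Schwarz. For the curvature term, rescale to the $\omega$-orthonormal frame $\ti{e}_i=\lambda_i^{-1/2}e_i$ (on the indices with $\lambda_i>0$), so that $R_{i\bar i k\bar k}(\omega)=\lambda_i\lambda_k\ti{R}_{i\bar i k\bar k}$ with $\ti{R}$ the curvature of $\omega$ in an orthonormal frame. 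Royden's lemma then says that if the holomorphic sectional curvature of $\omega$ is $\leq-\kappa$, then for all nonnegative $a_i$,
\begin{equation*}
\sum_{i,k}\ti{R}_{i\bar i k\bar k}\,a_ia_k\;\leq\;-\frac{\kappa}{2}\Big(\big(\sum_i a_i\big)^2+\sum_i a_i^2\Big);
\end{equation*}
applying this with $a_i=\lambda_i$ and using $\sum_i\lambda_i^2\geq\frac1n S^2$ yields $\sum_{i,k}R_{i\bar i k\bar k}(\omega)\leq-\frac{n+1}{2n}\kappa\,S^2$, hence $-\sum_{i,k}R_{i\bar i k\bar k}(\omega)\geq\frac{n+1}{2n}\kappa\,S^2$. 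Combining the two bounds, dividing by $S>0$, and noting that $p$ was arbitrary gives precisely $\Delta_{\hat\omega}\log\tr{\hat\omega}{\omega}\geq\big(\tfrac{n+1}{2n}\kappa+\tfrac{\nu}{n}\big)\tr{\hat\omega}{\omega}-\lambda$ everywhere on $X$.

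Finally, since $X$ is compact, $S=\tr{\hat\omega}{\omega}$ attains its maximum at some $x_0$, where $\Delta_{\hat\omega}\log S(x_0)\leq0$; feeding this into the differential inequality forces $S(x_0)\leq\lambda\big(\tfrac{n+1}{2n}\kappa+\tfrac{\nu}{n}\big)^{-1}$ (the denominator being positive because $\nu>0$), which is the asserted uniform bound. I expect the Royden step to be the main obstacle: one must verify that the polarization genuinely yields the constant $\tfrac{n+1}{2n}$ and that it accommodates the weights $\lambda_i$, including the degenerate directions with $\lambda_i=0$, since the hypothesis controls only the components $\ti{R}_{i\bar i i\bar i}$ and not the mixed ones. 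By comparison, writing out the precise Chern--Lu--Yau identity and carrying out the Cauchy--Schwarz cancellation of the gradient terms is routine, provided one is careful with sign conventions.
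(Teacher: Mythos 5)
Your proposal is correct and follows essentially the same route as the paper: Yau's Schwarz lemma (Chern--Lu) computation for the identity map, Royden's polarization lemma to convert the holomorphic sectional curvature bound into $\hat{g}^{i\ov{j}}\hat{g}^{k\ov{\ell}}R_{i\ov{j}k\ov{\ell}}\leq -\frac{n+1}{2n}\kappa(\tr{\hat{\omega}}{\omega})^2$, the hypothesis \eqref{assum} plus Cauchy--Schwarz for the Ricci term, and the maximum principle for the sup bound. (Your worry about degenerate directions is moot since $\omega$ is a genuine K\"ahler metric, so all eigenvalues $\lambda_i$ are strictly positive.)
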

\begin{proof}
Yau's Schwarz Lemma calculation \cite{YaS} gives
$$\Delta_{\hat{\omega}}\log\tr{\hat{\omega}}{\omega}\geq \frac{1}{\tr{\hat{\omega}}{\omega}}\left(\hat{g}^{i\ov{\ell}}\hat{g}^{k\ov{j}}g_{k\ov{\ell}}\hat{R}_{i\ov{j}}
-\hat{g}^{i\ov{j}}\hat{g}^{k\ov{\ell}}R_{i\ov{j}k\ov{\ell}}\right).$$
An observation of Royden \cite[Lemma, p.552]{Ro} gives
$$\hat{g}^{i\ov{j}}\hat{g}^{k\ov{\ell}}R_{i\ov{j}k\ov{\ell}}\leq -\frac{n+1}{2n}\kappa (\tr{\hat{\omega}}{\omega})^2,$$
while \eqref{assum} says that
$$\hat{R}_{i\ov{j}}\geq -\lambda \hat{g}_{i\ov{j}}+\nu g_{i\ov{j}},$$
and so
$$\hat{g}^{i\ov{\ell}}\hat{g}^{k\ov{j}}g_{k\ov{\ell}}\hat{R}_{i\ov{j}}\geq -\lambda \tr{\hat{\omega}}{\omega} +\nu\hat{g}^{i\ov{\ell}}\hat{g}^{k\ov{j}}g_{k\ov{\ell}}g_{i\ov{j}}
\geq -\lambda \tr{\hat{\omega}}{\omega} +\frac{\nu}{n}(\tr{\hat{\omega}}{\omega})^2.$$
\end{proof}

We now prove Theorem \ref{main} by contradiction. If $K_X$ is not nef, then there exists $\ve_0>0$ such that the class
$\ve_0[\omega]-c_1(X)$ is nef but not K\"ahler. Then for every $\ve>0$ the class $(\ve+\ve_0)[\omega]-c_1(X)$ is K\"ahler, and so we can find
a smooth function $\vp_\ve$ such that
$$(\ve+\ve_0)\omega-\Ric(\omega)+\ddbar\vp_\ve>0.$$
By a theorem of Yau \cite{Ya} and Aubin \cite{A}, we can find a smooth function $\psi_\ve$ such that
$$(\ve+\ve_0)\omega-\Ric(\omega)+\ddbar(\vp_\ve+\psi_\ve)>0,$$
and
$$((\ve+\ve_0)\omega-\Ric(\omega)+\ddbar(\vp_\ve+\psi_\ve))^n=e^{\vp_\ve+\psi_\ve}\omega^n.$$
We let $u_\ve=\vp_\ve+\psi_\ve$ and $\omega_\ve=(\ve+\ve_0)\omega-\Ric(\omega)+\ddbar(\vp_\ve+\psi_\ve)$, so that we can write
\begin{equation}\label{ma}
\omega_\ve^n=e^{u_\ve}\omega^n.
\end{equation}
Differentiating this, we see that
\begin{equation}\label{ric}
\Ric(\omega_\ve)=\Ric(\omega)-\ddbar u_\ve=-\omega_\ve+(\ve+\ve_0)\omega.
\end{equation}
We may therefore apply Lemma \ref{schwarz}, with $\kappa=0, \lambda=1, \nu=\ve+\ve_0$, and obtain
\begin{equation}\label{uno}
\sup_X \tr{\omega_\ve}{\omega}\leq\frac{n}{\ve+\ve_0},
\end{equation}
which is bounded uniformly independent of $\ve$ (as $\ve$ approaches zero).
Furthermore, at any point $x\in X$ where $u_\ve$ achieves its maximum, we have that $((\ve+\ve_0)\omega-\Ric(\omega))(x)>0$ and
$$e^{\sup_X u_\ve}=e^{u_\ve(x)}\leq \frac{((\ve+\ve_0)\omega-\Ric(\omega))^n}{\omega^n}(x)\leq C,$$
independent of $\ve$ small. This proves a uniform upper bound for $u_\ve$, and hence we obtain
\begin{equation}\label{due}
\sup_X\frac{\omega_\ve^n}{\omega^n}\leq C.
\end{equation}
Combining \eqref{uno}, \eqref{due} and the elementary inequality
$$\tr{\omega}{\omega_\ve}\leq \frac{1}{(n-1)!}(\tr{\omega_\ve}{\omega})^{n-1}\frac{\omega_\ve^n}{\omega^n},$$
we conclude that
\begin{equation}\label{tre}
\sup_X\tr{\omega}{\omega_\ve}\leq C,
\end{equation}
and \eqref{uno} and \eqref{tre} together give
\begin{equation}\label{quattro}
C^{-1}\omega\leq \omega_\ve\leq C\omega.
\end{equation}
Using \eqref{ma} this implies that $\inf_Xu_\ve\geq -C$ as well.
We claim that the following higher order estimates hold
\begin{equation}\label{cinque}
\|\omega_\ve\|_{C^k(X,\omega)}\leq C_k,
\end{equation}
where $C_k$ is independent of $\ve$, for all $k\geq 0$. These are essentially standard (following the work of Yau \cite{Ya}), but since the precise setting
is not readily found in the literature (we have no control on our reference metrics $(\ve+\ve_0)\omega-\Ric(\omega)+\ddbar\vp_\ve$ as $\ve\to 0$), we  provide a proof below.

Assuming first that we have \eqref{cinque}, we can immediately conclude the proof of Theorem \ref{main}. Indeed, using \eqref{cinque} together with \eqref{quattro}, with the Ascoli-Arzel\`a theorem and a diagonal argument, we obtain that there exists a sequence $\ve_i\to 0$ such that $\omega_{\ve_i}$ converge smoothly to a K\"ahler metric $\omega_0$ which satisfies
$$[\omega_0]=\ve_0[\omega]-c_1(X),$$
which is a contradiction to the fact that this class is not K\"ahler.

For the reader's convenience, we now give the proof of the higher-order estimates \eqref{cinque}, following \cite{Ya}. Let
$$S=|\nabla^\omega \omega_\ve|^2_{\omega_\ve},$$
where $\nabla^\omega$ is the covariant derivative of $\omega$. It is also equal to $S=|T|^2_{\omega_\ve}$, where $T$ is the tensor which is
given by the difference of the Christoffel symbols of $\omega_\ve$ and $\omega$. Yau's $C^3$ calculation gives (cfr. \cite[(2.44)]{PSS})
\[\begin{split}
\Delta_{\omega_\ve} S &=   |\nabla T|^2_{\omega_\ve} +|\ov{\nabla} T|^2_{\omega_\ve}-2\mathrm{Re}(g_\ve^{i\ov{p}}g_\ve^{j\ov{q}}(g_\ve)_{k\ov{\ell}}\ov{T^\ell_{pq}}(\nabla_i R^k_j-
\nabla^{\ov{\ell}}R(\omega)_{i\ov{\ell}j}^{\ \ \ k})  )\\
&+T_{ij}^k\ov{T_{pq}^\ell}(R^{i\ov{p}}g_\ve^{j\ov{q}}(g_\ve)_{k\ov{\ell}}+g_\ve^{i\ov{p}}R^{j\ov{q}}(g_\ve)_{k\ov{\ell}}-g_\ve^{i\ov{p}}g_\ve^{j\ov{q}}R_{k\ov{\ell}}   ),
\end{split}\]
where $\nabla$ denotes the covariant derivative of $\omega_\ve$, $R_{k\ov{\ell}}$ its Ricci curvature, $R(\omega)$ is the curvature tensor of $\omega$, and we raise indices using $\omega_\ve$.
Using \eqref{ric} together with \eqref{quattro} we obtain
$$\Delta_{\omega_\ve} S\geq -C_0S-C.$$
But we also have
\[\begin{split}\Delta_{\omega_\ve}\tr{\omega}{\omega_\ve}&=\Delta_{\omega}u_\ve -R_\omega+g^{i\ov{\ell}}\, g_\ve^{p\ov{j}} g_\ve^{k\ov{q}} \nabla^\omega_i (g_\ve)_{k\ov{j}} \nabla^\omega_{\ov{\ell}}(g_\ve)_{p\ov{q}}+g^{i\ov{\ell}}\,g_\ve^{p\ov{j}} (g_\ve)_{k\ov{\ell}}R(\omega)_{p\ov{j}i}^{\ \ \ k}\\
&\geq C_1^{-1}S-C,
\end{split}\]
where $R_\omega$ is the scalar curvature of $\omega$,
and so the maximum principle applied to $S+C_1(C_0+1)\tr{\omega}{\omega_\ve}$ gives $\sup_X S\leq C$, independent of $\ve$ small. This implies that
$\|\omega_\ve\|_{C^1(X,\omega)}\leq C$, and then a standard bootstrap argument gives all the higher order estimates \eqref{cinque}.

\end{document}